\newtheorem{theorem}{Theorem}[section]
\newtheorem{corollary}[theorem]{Corollary}
\newtheorem{lemma}[theorem]{Lemma}
\theoremstyle{remark}
\numberwithin{equation}{section}
\newcommand{\Acal}{\mathscr{A}}
\newcommand{\Hcal}{\mathscr{H}}
\newcommand{\Lcal}{\mathscr{L}}
\newcommand{\Pcal}{\mathscr{P}}
\newcommand{\Z}{\mathbb{Z}}
\newcommand{\C}{\mathbb{C}}
  \DeclareFontFamily{U}{wncy}{}
    \DeclareFontShape{U}{wncy}{m}{n}{<->wncyr10}{}
    \DeclareSymbolFont{mcy}{U}{wncy}{m}{n}
    \DeclareMathSymbol{\Sha}{\mathord}{mcy}{"58}
\begin{document}
\title[]{Hilbert's tenth problem for lacunary entire functions of finite order}

\author{Natalia Garcia-Fritz}
\address{ Departamento de Matem\'aticas\newline
\indent Pontificia Universidad Cat\'olica de Chile\newline
\indent Facultad de Matem\'aticas\newline
\indent 4860 Av. Vicu\~na Mackenna\newline
\indent Macul, RM, Chile}
\email[N. Garcia-Fritz]{natalia.garcia@mat.uc.cl}

\author{Hector Pasten}
\address{ Departamento de Matem\'aticas\newline
\indent Pontificia Universidad Cat\'olica de Chile\newline
\indent Facultad de Matem\'aticas\newline
\indent 4860 Av. Vicu\~na Mackenna\newline
\indent Macul, RM, Chile}
\email[H. Pasten]{hpasten@gmail.com}%

\thanks{N.\ G.-F.\ was supported by ANID FONDECYT Regular grant 1211004 from Chile. H.P.\ was supported by ANID FONDECYT Regular grant 1230507 from Chile.}
\date{\today}
\subjclass[2010]{Primary: 03B25, 30B10; Secondary: 11U05} %
\keywords{Hilbert's tenth problem, holomorphic, holonomic, lacunary}%
\dedicatory{To the memory of Thanases Pheidas}

\begin{abstract} In the context of Hilbert's tenth problem, an outstanding open case is that of complex entire functions in one variable. A negative solution is known for polynomials (by Denef) and for exponential polynomials of finite order (by Chompitaki, Garcia-Fritz, Pasten, Pheidas, and Vidaux), but no other case is known for rings of complex entire functions in one variable. We prove a negative solution to the analogue of Hilbert's tenth problem for rings of complex entire functions of finite order having lacunary power series expansion at the origin. 
\end{abstract}

\maketitle



\section{Introduction} 

\subsection{Hilbert's tenth problem} Hilbert's tenth problem asked for an algorithm to decide existence of solutions of polynomial equations over the integers. It was shown to be unsolvable by Matijasevich \cite{Matijasevic} after the work of Davis, Putnam, and Robinson \cite{DPR}; namely, the positive existential theory of the ring $\Z$ is undecidable.

After this negative solution to Hilbert's tenth problem, the analogous problem for other rings has been intensively studied. An outstanding case that remains open is that of the ring $\Hcal$ of complex entire functions in one variable $z$ seen as a structure over the language $\Lcal_z=\{0,1,z,+,\times,=\}$, see \cite{GhentSurvey}. The main difficulty is that, unlike the case of algebraic function fields, there are very large continuous families of transcendental holomorphic maps into group varieties. This fact prevents one from using the classical method of interpreting the integers using such varieties.

Nevertheless, there is considerable progress for other rings similar to  $\Hcal$;  the non-archimedean counterpart was negatively solved in \cite{LipPhe} and \cite{GarPas}, while the case of complex entire functions in at least two variables was negatively solved in \cite{PheVidHolo}. 

The current progress has been much more difficult for subrings of $\Hcal$ containing $\C[z]$, seen as $\Lcal_z$-structures. A negative solution of the analogue of Hilbert's tenth problem was first established  for complex polynomials by Denef \cite{Denef} and, more recently, for exponential polynomials of finite order by Chompitaki, Garcia-Fritz, Pasten, Pheidas, and Vidaux \cite{CGPPV}. This covers all known cases for subrings of $\Hcal$ containing $\C[z]$.

In this work we give a negative solution to the analogue of Hilbert's tenth problem for rings of lacunary entire functions of finite order. Let us introduce some notation before giving a precise formulation of our main results.

\subsection{Notation} 

We use Landau's $O$-notation, with subscripts to indicate the dependence of the implicit constant on other parameters. An entire function $f\in \Hcal$ is said to be of \emph{finite order} if there is a constant $A\ge 0$ (depending on $f$) such that for all $r>1$ we have 
$$
\log \max\{|f(z)| : |z|\le r\} = O_f(r^A).
$$

Entire functions of finite order form a ring that we denote by $\Hcal_{\rm fin}$.

Let  $F=\sum_{n\ge 0} a_n z^n\in \C[[z]]$ be a formal power series.  We say that $F$ is \emph{holonomic} if there is an integer $k\ge 1$ and polynomials $P_0,P_1,...,P_k\in \C[z]$ with $P_k\ne 0$ such that $F$ satisfies the differential equation
$$
P_k F^{(k)} + ...+ P_1F^{(1)} + P_0F=0.
$$
On the other hand, we say that $F$ is \emph{lacunary} if for every $M\ge 1$ there is $N$ such that $a_{N+j}=0$ for each $j=1,2,...,M$.  

A function in one complex variable which is holomorphic in a neighborhood of $z=0$ (for instance, an entire function) is said to be \emph{holonomic} (resp.\ \emph{lacunary}) if its power series expansion at $z=0$ is. We remark that polynomials are holonomic, lacunary, and of finite order. 

Finally, we note that if a ring of entire functions contains $\C[z]$, then it can be regarded as a structure over the language $\Lcal_z=\{0,1,z,+,\times, =\}$ in a natural way.

\subsection{Main results} Our first result is the following:

\begin{theorem}\label{ThmMainMain} Let $\Acal\subseteq \Hcal_{\rm fin}$ be a ring containing $\C[z]$ such that $\{f\in \Acal : f\mbox{ is holonomic}\}=\C[z]$. Then $\Z$ is positive existentially definable in $\Acal$ over the language $\Lcal_z$. In particular, the analogue of Hilbert's tenth problem for $\Acal$ is unsolvable; that is, the positive existential theory of $\Acal$ over $\Lcal_z$ is undecidable.
\end{theorem}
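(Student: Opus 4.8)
The plan is to interpret $\Z$ positive-existentially in $\Acal$ in two stages: first carve out a suitable ``coefficient-like'' subset of $\Acal$ using holonomicity as a definable constraint, and then use that set to transfer a known negative solution of Hilbert's tenth problem. The key observation driving the strategy is the hypothesis: the holonomic elements of $\Acal$ are exactly $\C[z]$. So if we can write down a positive existential $\Lcal_z$-formula $\Phi(f)$ that holds in $\Acal$ precisely for the holonomic $f$, then $\Phi$ defines $\C[z]$ inside $\Acal$, and we are reduced to Denef's theorem \cite{Denef} that $\Z$ is positive existentially definable in $\C[z]$ over $\Lcal_z$. (One then composes the two interpretations; standard.) Thus the whole problem collapses to: give a positive existential definition of the holonomic functions inside any ring $\Acal$ of finite-order entire functions containing $\C[z]$.

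First I would reformulate holonomicity in a first-order friendly way. A holomorphic function $f$ near $0$ is holonomic if and only if the $\C(z)$-vector space spanned by $f,f',f'',\dots$ is finite dimensional, equivalently if the $\C[z]$-module generated by the derivatives $f^{(i)}$ is finitely generated, equivalently (clearing denominators) there exist $k\ge 1$ and $P_0,\dots,P_k\in\C[z]$ with $P_k\ne 0$ and $P_kf^{(k)}+\dots+P_0f=0$. The obstacles to expressing this directly in $\Lcal_z$ are: (i) differentiation is not a ring operation in $\Lcal_z$; (ii) one cannot existentially quantify over the unbounded parameter $k$ (the number of $P_i$'s); and (iii) one must quantify over polynomials $P_i$ while only having access to elements of $\Acal$. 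Point (iii) is handled by the reduction above in reverse — once we have \emph{some} positive existential definition of a ring between $\C[z]$ and $\Acal$ we can bootstrap — but more cleanly one notices that if $P_0,\dots,P_k$ witness holonomicity of $f$, they are themselves holonomic, hence (granting the definition we are trying to build is available for them, via an induction on a complexity parameter, or via Denef first on a smaller set) lie in $\C[z]$; so it suffices to quantify over $\Acal$ and impose ``holonomic'' on the witnesses. The serious issues are (i) and (ii).

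For (i), the standard trick is to eliminate derivatives in favor of the differential equation itself: rather than saying ``$f$ satisfies a linear ODE with polynomial coefficients'', encode a first-order system. Introduce auxiliary unknowns $g_0=f,g_1,\dots,g_{k-1}$ (the would-be derivatives) and polynomial coefficients, and assert the system $z\,g_i' = (\text{polynomial combination of the } g_j)$; but $g_i'$ still appears. To truly remove differentiation, I would instead exploit that finite-order entire functions have controlled growth and use a growth/valuation characterization: $f$ is holonomic iff it is a $\C(z)$-linear combination of a fixed finite set of functions forming a ``differential module'', which in turn can be detected by the existence of finitely many $h_1,\dots,h_m\in\Acal$ and rational functions expressing all the needed closure properties as \emph{algebraic} (not differential) identities after a change of perspective — concretely, using the fact that $\{$holonomic$\}$ is exactly $\C[z]$ in $\Acal$, holonomicity of $f\in\Acal$ is equivalent to: there exist $p,q\in\C[z]$, $q\ne 0$, with $qf=p$, i.e.\ $f\in\C[z]$ outright. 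So in fact inside $\Acal$ the predicate ``$f$ is holonomic'' coincides with ``$f\in\C[z]$'', and $\C[z]$ is the \emph{integral closure of nothing} — rather, the point is that we need a positive existential definition of $\C[z]$ itself in $\Acal$, and the natural handle is: $f\in\C[z]$ iff $f$ has finitely many zeros counted with multiplicity and $f$ is ``polynomially bounded'', and being polynomially bounded among finite-order functions should be expressible by an inequality $|f(z)|\le |z^N|+1$ on a large circle, which one converts into a positive existential statement via an auxiliary function realizing the domination. This is where I expect to lean on ancillary lemmas (analogous to those in \cite{CGPPV}) producing, for each $f\in\Acal$, a companion $g\in\Acal$ whose vanishing or non-vanishing certifies the needed growth bound.

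For (ii), the unbounded $k$, the fix is to bound $k$ in terms of data already visible: since the witnesses $P_i$ must (by the hypothesis) lie in $\C[z]$, and since we will have already pinned down $\C[z]$ inside $\Acal$, we can run Denef's coding of $\Z$ in $\C[z]$ \emph{directly on that definable copy of $\C[z]$}, never needing to quantify over $k$ at all — the holonomicity predicate is used only as the engine that isolates $\C[z]$, and once isolated, $\C[z]$ with its induced $\Lcal_z$-structure is literally the polynomial ring to which Denef's theorem applies verbatim. So the architecture is: (a) prove a positive existential $\Lcal_z$-definition of $\C[z]$ inside $\Acal$, using growth estimates for finite-order functions plus the hypothesis $\{f\in\Acal:f\text{ holonomic}\}=\C[z]$; (b) invoke \cite{Denef} to get $\Z$ positive existentially definable in that copy of $\C[z]$; (c) compose to get $\Z$ positive existentially definable in $\Acal$, whence undecidability of the positive existential theory.

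The main obstacle is step (a): producing an honest \emph{positive existential} (no negations, no universal quantifiers) $\Lcal_z$-formula that defines $\C[z]$ inside an arbitrary such $\Acal$. Positivity is the crux — ruling out a function by a growth estimate is naturally a $\forall$/$\neg$ statement, so the real content is to find, for finite-order entire functions, ``positive certificates'' of polynomial growth (and of being holonomic). I expect this to require the delicate analytic input of the paper: constructing auxiliary entire functions in $\Acal$ (using that $\Acal$ is closed under the relevant operations and that finite order is preserved) that serve as such certificates, in the spirit of the exponential-polynomial argument of \cite{CGPPV} but adapted to the lacunary/holonomic setting.
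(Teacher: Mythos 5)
There is a genuine gap: your entire architecture rests on step (a), a positive existential $\Lcal_z$-definition of $\C[z]$ (equivalently, of the holonomic elements) inside $\Acal$, and you never produce it --- you correctly identify positivity as the crux, survey several approaches (encoding the ODE, growth certificates, bounding $k$) and explain why each is problematic, but in the end you defer the construction to ``the delicate analytic input of the paper.'' As written, the proposal is a reduction of the theorem to an unproved (and in fact unnecessary) claim. The discussion also circles: you observe that inside $\Acal$ ``holonomic'' coincides with ``lies in $\C[z]$'' by hypothesis, which just restates that you need to define $\C[z]$, the very thing you set out to do.

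The missing idea is that one should not try to define $\C[z]$ or ``holonomic'' at all. The paper's route is: the single positive existential formula $p(y):\ \exists x,\ x^2-(z^2-1)y^2=1$ already defines, in $\Acal$, exactly the set $\{y_n : n\in\Z\}$ of classical Pell polynomials. Indeed, Theorem \ref{ThmPell} (the analytic content, proved via Lemma \ref{Lemma2} and closure properties of holonomic functions) shows every finite-order entire solution of the Pell equation is holonomic; the hypothesis $\{f\in\Acal : f\text{ holonomic}\}=\C[z]$ then forces all solutions in $\Acal$ to be polynomials, and Denef's Lemma \ref{Lemma1} classifies those as $(\pm x_n,y_n)$ with $y_n(1)=n$. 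Evaluation at $z=1$ is positive existential ($y(1)=t$ iff $\exists f,\ (z-1)f=y-t$), and the target $t$ is constrained to be constant by Picard's theorem applied to $y^2=x^5-1$. So the holonomicity hypothesis is used only to shrink the solution set of one fixed Diophantine equation, not to carve out all of $\C[z]$; this sidesteps precisely the obstacles (unbounded order $k$, derivatives not in the language, growth bounds being universal statements) that block your step (a). Your steps (b) and (c) --- composing with Denef and concluding undecidability --- would be fine if (a) were available, but the paper's argument shows (a) is not needed.
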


We remark that every exponential polynomial of finite order is holonomic, see Section \ref{SecHolonomic}. Thus, Theorem \ref{ThmMainMain} (when $\Acal\ne \C[z]$) gives a negative solution to the analogue of Hilbert's tenth problem in cases that are completely different to those covered in \cite{CGPPV}.

The only  lacunary and holonomic entire functions are polynomials, cf.\ Section \ref{SecHolonomic}. Thus we get

\begin{theorem}\label{ThmMain} Let $\Acal\subseteq \Hcal_{\rm fin}$ be a ring containing $\C[z]$ such that every $f\in \Acal$ is lacunary. Then $\Z$ is positive existentially definable in $\Acal$ over the language $\Lcal_z$.  In particular, the analogue of Hilbert's tenth problem for $\Acal$ is unsolvable; that is, the positive existential theory of $\Acal$ over $\Lcal_z$ is undecidable.
\end{theorem}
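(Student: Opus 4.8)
The plan is to deduce Theorem \ref{ThmMain} directly from Theorem \ref{ThmMainMain}. Given a ring $\Acal$ with $\C[z]\subseteq \Acal\subseteq \Hcal_{\rm fin}$ in which every element is lacunary, it suffices to verify the hypothesis of Theorem \ref{ThmMainMain}, namely that the only holonomic elements of $\Acal$ are the polynomials. Since $\C[z]\subseteq \Acal$ and polynomials are holonomic, one inclusion is immediate; so the real content is showing that if $f\in \Acal$ is holonomic then $f\in \C[z]$. By hypothesis such an $f$ is simultaneously lacunary and holonomic (and entire), so the claim reduces to the assertion quoted in the excerpt just before the theorem: the only lacunary and holonomic entire functions are polynomials. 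Granting that assertion (established in Section \ref{SecHolonomic}), we conclude $\{f\in\Acal : f \text{ is holonomic}\}=\C[z]$, and Theorem \ref{ThmMainMain} then gives that $\Z$ is positive existentially definable in $\Acal$ over $\Lcal_z$, whence the positive existential theory of $\Acal$ over $\Lcal_z$ is undecidable because the positive existential theory of $\Z$ is undecidable by Matijasevich's theorem.

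The one input that is not entirely formal is the fact that a lacunary holonomic entire function must be a polynomial. I expect this to be the step where care is needed, although it is presumably handled in Section \ref{SecHolonomic}. The standard argument runs as follows: if $F=\sum_{n\ge 0}a_nz^n$ is holonomic, then clearing denominators in its defining differential equation and comparing coefficients of $z^n$ shows that the sequence $(a_n)$ satisfies a linear recurrence with polynomial coefficients; equivalently, there is a fixed integer $d\ge 1$ and polynomials $Q_0,\dots,Q_d\in\C[n]$ with $Q_d\not\equiv 0$ such that $Q_d(n)a_{n+d}+\cdots+Q_0(n)a_n=0$ for all $n$. If infinitely many $a_n$ are nonzero, pick $n$ large (beyond all roots of $Q_d$ and beyond the reach of a long block of zeros guaranteed by lacunarity) with $a_n\ne 0$ but $a_{n+1}=\cdots=a_{n+d}=0$; feeding this into the recurrence forces $Q_0(n)a_n=0$, hence $a_n=0$ once $n$ exceeds the roots of $Q_0$, a contradiction. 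Hence only finitely many $a_n$ are nonzero and $F$ is a polynomial. (Entirety is not even needed here, only that $F$ is a formal power series that is both holonomic and lacunary.)

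Thus the proof of Theorem \ref{ThmMain} is a short reduction: combine Theorem \ref{ThmMainMain} with the polynomiality of lacunary holonomic functions. No genuinely new obstacle arises at this stage; all the analytic and logical difficulty is concentrated in Theorem \ref{ThmMainMain}, whose proof we may invoke. For completeness one should also recall why $\C[z]\subseteq\Acal\subseteq\Hcal_{\rm fin}$ makes $\Acal$ a legitimate $\Lcal_z$-structure, which is the remark at the end of the Notation subsection.
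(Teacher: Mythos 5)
Your proposal is correct and follows exactly the paper's route: the paper proves Theorem \ref{ThmMain} by citing Theorem \ref{ThmMainMain} together with Corollary \ref{Coro} (a lacunary power series is holonomic iff it is a polynomial), which is itself deduced from the sequence characterization of holonomicity, just as in your sketch of the recurrence-plus-gap argument. No discrepancies to report.
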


\subsection{Examples} Theorem \ref{ThmMainMain} is applicable in the case $\Acal=\C[z]$, which is Denef's theorem \cite{Denef}. Let us describe a different example that contains plenty of transcendental functions.

For $f\in \Hcal$ with power series expansion $f=\sum_{n\ge 0} a_n z^n$ we define $N_f(x) = \#\{n\le x : a_n\ne 0\}$. Let $\Acal_{0}$ be the set of all $f\in \Hcal_{\rm fin}$ satisfying that for every $\epsilon>0$ we have $N_f(x)=O_{f,\epsilon}(x^{\epsilon})$. For instance, the following is an element of $\Acal_0$ (for growth order, compare with the exponential function):
$$
f(z)=\sum_{n\ge 1} \frac{z^{n^n}}{(n^n)!}.
$$
Every $f\in \Acal_0$ is lacunary and moreover $\C[z]\subseteq \Acal_0$. Note that for every $f,g\in \Hcal$ we have $N_{f+g}(x)\le N_f(x)+N_g(x)$ and $N_{f\cdot g}(x)\le N_f(x)\cdot N_g(x)$, thus, $\Acal_0$ is a ring and Theorem \ref{ThmMain} is applicable in the case $\Acal=\Acal_0$.

\subsection{Functional Pell equations} The proof of Theorem \ref{ThmMainMain} is inspired in the ideas pioneered by Denef in the case of polynomials \cite{Denef} using the functional Pell equation $X^2- (z^2-1)Y^2=1$. The key technical input for Theorem \ref{ThmMain} is the following result on this Pell equation, which can be of independent interest.

\begin{theorem}\label{ThmPell} Let $f,g\in \Hcal_{\rm fin}$ be entire functions of finite order satisfying $f^2 - (z^2-1)g^2=1$. Then $f$ and $g$ are holonomic.
\end{theorem}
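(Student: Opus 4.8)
The plan is to exploit the classical parametrization of solutions to the Pell equation together with growth estimates forced by the finite-order hypothesis. First I would observe that the equation $f^2-(z^2-1)g^2=1$ factors as $(f-g\sqrt{z^2-1})(f+g\sqrt{z^2-1})=1$ over the quadratic extension $\C(z)[\sqrt{z^2-1}]$; equivalently, on the Riemann surface of $\sqrt{z^2-1}$ the functions $f\pm g\sqrt{z^2-1}$ are entire, nowhere vanishing, and reciprocal to each other. The standard solution $z+\sqrt{z^2-1}$ suggests introducing $u=f+g\sqrt{z^2-1}$, so that $u$ is a unit in the ring of entire functions on this surface with $u^{-1}=f-g\sqrt{z^2-1}$, and $f=(u+u^{-1})/2$, $g=(u-u^{-1})/(2\sqrt{z^2-1})$. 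Writing $z=\cos\theta$, i.e.\ pulling back along $z=(w+w^{-1})/2$ with $w=e^{i\theta}$, the surface is uniformized and $u$ becomes an entire nowhere-vanishing function $U(w)$ on $\C^\times$, hence $U(w)=w^m e^{h(w)}$ for some integer $m$ and some $h$ holomorphic on $\C^\times$; the finite-order condition on $f,g$ constrains $h$.

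The key step is then to show that the finite-order hypothesis forces $h$ to be a Laurent \emph{polynomial}, and in fact one should be able to pin down $U$ up to finitely many shapes. Under the substitution $z=(w+w^{-1})/2$, an entire function of finite order in $z$ pulls back to a function on $\C^\times$ whose growth in $|w|$ and $|w|^{-1}$ is controlled, because $|z|\asymp \max(|w|,|w|^{-1})$ away from the unit circle; so $f$ and $g$, and therefore $u=f+g\sqrt{z^2-1}$ and $u^{-1}$, have at most polynomial-in-$\log$ growth of their logarithms in $|w|$ and in $|w|^{-1}$. Applying this to both $U$ and $1/U=w^{-m}e^{-h}$ bounds $\mathrm{Re}\,h(w)$ from above and below by a power of $\max(|w|,|w|^{-1})$, and a Borel--Carathéodory plus Laurent-coefficient argument then shows $h$ is a Laurent polynomial. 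At that point $u=w^m e^{h(w)}$ with $h$ a Laurent polynomial; substituting back, $f=(u+u^{-1})/2$ is a finite $\C$-linear combination of terms $w^{m+k}e^{\pm(\text{Laurent poly})}$, each of which is an exponential polynomial in $\theta$, hence in $z$ after re-expressing symmetrically — more precisely one checks the result descends to a genuine entire function of $z$ that is an exponential polynomial. Since exponential polynomials of finite order are holonomic (Section \ref{SecHolonomic}), and $g=(f^2-1)/(z^2-1)$ is then a rational function of $f$ and $z$, both $f$ and $g$ are holonomic, as the holonomic functions form a ring closed under the relevant operations.

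The main obstacle I anticipate is the passage to the double cover and back: one must handle carefully the two branches of $\sqrt{z^2-1}$ and verify that $U(w)$ is single-valued and entire on all of $\C^\times$ (rather than only on a slit plane), using that $f,g$ are entire in $z$ and that the monodromy of $\sqrt{z^2-1}$ around $z=\pm 1$ swaps $u\leftrightarrow u^{-1}$, which is consistent with $w\mapsto 1/w$; and then, after proving $h$ is a Laurent polynomial, one must confirm that the resulting expression for $f$ really is invariant under $w\mapsto 1/w$ so that it descends to an entire function of $z$ of finite order. A secondary technical point is the growth bookkeeping: matching the order-$A$ bound on $\log|f|$ in $|z|\le r$ with a bound on $\log|U(w)|$ uniformly as $|w|\to 0$ and $|w|\to\infty$, which is where the finite-order hypothesis is essential and without which (e.g.\ for $h$ with an essential singularity of infinite type) the conclusion would fail.
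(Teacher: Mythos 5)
Your uniformization argument is, in substance, a self-contained proof of the structural input that the paper instead imports from \cite{CGPPV} (Lemma \ref{Lemma2}): setting $z=(w+w^{-1})/2$, the unit $u=f+g\sqrt{z^2-1}$ becomes a nowhere-vanishing holomorphic function $U=w^m e^{h}$ on $\C^\times$, and the finite-order hypothesis forces $h$ to be a Laurent polynomial. Your Borel--Carath\'eodory scheme for that step is the right tool, the descent and monodromy issues you flag are real but handled exactly as you propose, and the outcome matches the paper's normalization $f+wg=\epsilon\cdot(z+w)^n\exp(wh)$ with $h\in\C[z]$.

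The gap is in the final deduction of holonomicity, in two places. First, $f=(U+U^{-1})/2$ does \emph{not} in general descend to an exponential polynomial in $z$: already $n=0$, $h=1$ gives $f=\cosh(\sqrt{z^2-1})$, which is entire of order $1$ but is not of the form $\sum_j P_j(z)e^{Q_j(z)}$ (its asymptotic behaviour $\tfrac12 e^{x}\bigl(1-\tfrac{1}{2x}+\cdots\bigr)$ as $x\to+\infty$ on the real axis, a nonterminating series in $1/x$, is incompatible with any such finite sum; equivalently, the crux of \cite{CGPPV} is that the only exponential polynomial solutions of \eqref{EqnPell} of finite order are the polynomial ones). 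So you cannot close the argument by citing ``exponential polynomials of finite order are holonomic.'' What is true, and what the paper uses, is Lemma \ref{Lemmah2}: fixing a branch $W$ of $\sqrt{z^2-1}$ near $z=0$, the exponent $Wh$ and the factors $(z\pm W)^n$ are algebraic over $\C(z)$, so $\exp(Wh)$ is holonomic by closure under composition with algebraic functions, and $A=f+Wg$, $B=f-Wg$ are holonomic by closure under products and sums; then $f=(A+B)/2$. Second, your formula $g=(f^2-1)/(z^2-1)$ is wrong --- that is $g^2$ --- and in any case holonomic functions are not closed under rational operations (e.g.\ $\sec z$ is not holonomic, having infinitely many singularities), let alone square roots. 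The fix is the expression you already wrote earlier, $g=(A-B)/(2W)$, together with the fact that $1/W$ is algebraic, hence holonomic. With these two repairs your argument closes, and its first half amounts to an independent proof of the cited Lemma \ref{Lemma2} rather than an appeal to it.
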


The proof builds on the theory of holonomic functions as well as preliminary results from \cite{CGPPV} regarding holomorphic solutions of the Pell equation. See Section \ref{SecPell}.


\section{Holonomic functions} \label{SecHolonomic}

A sequence of complex numbers $a_0,a_1,...$ is called \emph{holonomic} if there is an integer $k\ge 1$ and polynomials $p_0,p_1,...,p_k\in \C[z]$ with $p_k\ne 0$ such that for every $n\ge 0$ we have
$$
p_k(n)\cdot a_{n+k} + ... + p_1(n)\cdot a_{n+1} + p_0(n)\cdot a_n=0.
$$
A basic result on holonomic functions is the following, see Theorem 7.1 \cite{KauersPaule}.

\begin{lemma}[Relation with sequences] Let $(a_n)_{n\ge 0}$ be a sequence of complex numbers. Then $(a_n)_{n\ge 0}$ is holonomic if and only if the power series $\sum_{n\ge 0} a_nz^n$ is holonomic.
\end{lemma}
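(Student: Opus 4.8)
The statement to prove is the standard fact linking holonomic sequences with holonomic power series, so the plan is to establish both implications by transporting the defining differential equation to a recurrence and back.

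\medskip

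\textbf{From sequences to power series.} Suppose $(a_n)_{n \ge 0}$ satisfies a recurrence $\sum_{j=0}^{k} p_j(n)\, a_{n+j} = 0$ for all $n \ge 0$, with $p_k \ne 0$. Write $F = \sum_{n \ge 0} a_n z^n$. The plan is to recognize each operation on the coefficient side as an operation on $F$ that stays within the class of differential operators with polynomial coefficients: multiplication of $a_n$ by $n$ corresponds (up to a shift) to applying $z\frac{d}{dz}$, and the index shift $a_n \mapsto a_{n+1}$ corresponds to the operator $F \mapsto (F - F(0))/z$, i.e.\ division by $z$ after subtracting the constant term. More precisely, I would introduce the operators $\theta = z\frac{d}{dz}$ and $S$ (the shift), note that $S$ and $\theta$ act on $\C[[z]]$ via $S F = z^{-1}(F - a_0)$ and $\theta\big(\sum a_n z^n\big) = \sum n a_n z^n$, and observe that the algebra generated by $z$, $\frac{d}{dz}$ over $\C$ (the Weyl algebra) is closed under these manipulations. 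Substituting the polynomial recurrence coefficients $p_j(n)$ — which are polynomials in the multiplication-by-$n$ operator $\theta$ — and the shifts $S^j$ into the relation $\sum_j p_j(n) a_{n+j} = 0$, and clearing denominators by multiplying through by a suitable power of $z$, yields an identity $\mathcal{L} F = 0$ where $\mathcal{L}$ is a nonzero element of the Weyl algebra; rewriting $\mathcal{L}$ in the form $\sum_i P_i(z) \frac{d^i}{dz^i}$ with $P_i \in \C[z]$ and checking that the top coefficient is not identically zero (this is where $p_k \ne 0$ is used) shows $F$ is holonomic.

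\medskip

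\textbf{From power series to sequences.} Conversely, suppose $F = \sum_{n\ge 0} a_n z^n$ satisfies $\sum_{i=0}^{m} P_i(z) F^{(i)} = 0$ with $P_m \ne 0$. The plan is to extract the coefficient of $z^n$ on the left-hand side. Each term $z^b \, F^{(i)}$ contributes, to the coefficient of $z^n$, a term of the shape $c\,(n-b+i)(n-b+i-1)\cdots(n-b+1)\, a_{n-b+i}$, which is a polynomial in $n$ times $a_{n-b+i}$. Summing over all monomials $z^b$ appearing in the $P_i$ and over all $i$, the coefficient of $z^n$ becomes a finite $\C$-linear combination $\sum_{\ell} q_\ell(n)\, a_{n+\ell} = 0$ valid for all large $n$ (and, after absorbing the finitely many small-$n$ constraints into the polynomial coefficients or shifting the index, for all $n \ge 0$), where each $q_\ell \in \C[z]$. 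One must then check that the relevant extreme coefficient — say the one multiplying $a_{n+\ell_0}$ for the largest shift $\ell_0$ that occurs — is not the zero polynomial, so that the recurrence is genuine; this follows from $P_m \ne 0$ by tracking which monomial produces that largest shift. After re-indexing so the recurrence is written in the normalized form $\sum_{j=0}^k p_j(n) a_{n+j} = 0$ with $p_k \ne 0$, we conclude $(a_n)$ is holonomic.

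\medskip

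\textbf{Main obstacle.} The conceptual content is routine, but the bookkeeping requires care in two places: ensuring that, after clearing denominators (multiplying by powers of $z$) or shifting indices, the leading coefficient of the resulting operator or recurrence is genuinely nonzero rather than accidentally cancelling, and handling the gap between "the relation holds for all sufficiently large $n$" and "a holonomic recurrence valid for all $n \ge 0$" — the latter is handled by multiplying the recurrence by $\prod (n - n_0)$ over the finitely many exceptional indices, or equivalently by noting that the definition of holonomic sequence is insensitive to finitely many initial terms. Since the excerpt cites Theorem 7.1 of \cite{KauersPaule} for this lemma, I would in practice just invoke that reference; if a self-contained argument is wanted, the two paragraphs above give it, with the leading-coefficient nonvanishing being the only point that needs a genuine (if short) argument rather than mechanical expansion.
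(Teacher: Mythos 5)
The paper gives no proof of this lemma at all; it simply cites Theorem~7.1 of \cite{KauersPaule}, which is exactly the standard fact your sketch reproduces. Your two-directional argument (recurrence $\to$ ODE via $\theta=z\frac{d}{dz}$ and shift operators, ODE $\to$ recurrence via coefficient extraction, with the extreme coefficient's nonvanishing following from the distinct degrees of the falling factorials sharing the maximal shift) is the standard proof and is correct, so either the citation or your fleshed-out sketch would do.
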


As an immediate consequence we get

\begin{corollary}\label{Coro} Let $F\in \C[[z]]$ be a lacunary power series. Then $F$ is holonomic if and only if it is a polynomial.
\end{corollary}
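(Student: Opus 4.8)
The plan is to prove the contrapositive-style equivalence directly, with the only non-trivial direction being that a lacunary holonomic power series must be a polynomial. First I would observe the easy direction: every polynomial is holonomic (take $k=1$, $p_1(z)=1$, $p_0(z)=0$, so that $a_{n+1}=0$ for all $n\ge \deg F$, which certainly fits the recurrence framework; or more simply, a polynomial satisfies $F^{(k)}=0$ for $k$ large), and polynomials are trivially lacunary. So assume $F=\sum_{n\ge 0} a_n z^n$ is lacunary and holonomic, and aim to show $F$ has only finitely many nonzero coefficients.

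The key tool is the Lemma (Relation with sequences) stated just above: since $F$ is holonomic, the coefficient sequence $(a_n)_{n\ge 0}$ satisfies a recurrence
$$
p_k(n)\, a_{n+k} + \dots + p_1(n)\, a_{n+1} + p_0(n)\, a_n = 0
$$
for all $n\ge 0$, with $p_0,\dots,p_k\in\C[z]$ and $p_k\ne 0$. Let $R$ be the largest real root of $p_k$ (set $R=-1$ if there is none), so that $p_k(n)\ne 0$ for every integer $n>R$. The recurrence can then be solved for the top term: for $n>R$,
$$
a_{n+k} = -\frac{1}{p_k(n)}\bigl(p_{k-1}(n)\, a_{n+k-1} + \dots + p_0(n)\, a_n\bigr).
$$
Hence, for all such $n$, if $a_n = a_{n+1} = \dots = a_{n+k-1} = 0$, then $a_{n+k}=0$, and by induction $a_m = 0$ for every $m\ge n$.

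Now I invoke lacunarity with $M=k$: there exists $N$ with $a_{N+j}=0$ for $j=1,2,\dots,k$. Choosing this $N$ (and, if necessary, replacing it by a larger index from a later gap of length $k$ — lacunarity provides gaps of every length, so we may take $N>R$) gives $k$ consecutive vanishing coefficients $a_{N+1}=\dots=a_{N+k}=0$ with $N+1>R$. Applying the induction in the previous step starting at $n=N+1$ yields $a_m=0$ for all $m\ge N+1$, so $F=\sum_{n=0}^{N} a_n z^n$ is a polynomial. The only point requiring a little care is ensuring the gap of length $k$ occurs past the finitely many roots of $p_k$; this is immediate because the lacunarity condition, applied with $M=k$, produces such gaps for arbitrarily large $N$ (indeed it can be re-applied to any tail of the series), so a suitable $N>R$ exists. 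This is the ``main obstacle,'' and it is genuinely minor.
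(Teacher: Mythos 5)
Your proof is correct and is exactly the argument the paper intends when it derives the corollary as an ``immediate consequence'' of the relation-with-sequences lemma: a holonomic recurrence of order $k$ with leading coefficient $p_k(n)\ne 0$ propagates a gap of $k$ consecutive zero coefficients forward, and lacunarity supplies such a gap beyond the finitely many integer roots of $p_k$. The point you flag about placing the gap past the roots of $p_k$ is handled correctly (e.g.\ by applying lacunarity with $M$ large enough that the resulting gap of length $M$ contains a length-$k$ subgap starting beyond $R$), so nothing is missing.
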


We will also need the following closure properties, that we only state in the case of holomorphic functions ---the case of formal power series also holds, with an appropriate definition of composition. See Theorem 7.2 in \cite{KauersPaule}.

\begin{lemma}[Closure properties]\label{Lemmah2} Let $f$ and $g$ be functions of one complex variable that are holomorphic in a neighborhood of $z=0$. 

\begin{itemize}
\item[(i)] If $f$ is algebraic over $\C(z)$, then it is holonomic.
\item[(ii)] If $f$ and $g$ are holonomic, then $f+g$ and $fg$ are holonomic.
\item[(iii)] If $f$ is algebraic over $\C(z)$ and $g$ is an entire holonomic function, then $g\circ f$ is holonomic.
\end{itemize}
\end{lemma}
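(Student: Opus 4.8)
The plan is to reduce all three statements to a single linear-algebra characterization of holonomicity. For a function $h$ holomorphic near $z=0$, regard $h$ and all its derivatives as elements of the field $\mathcal{M}$ of germs of meromorphic functions at the origin, which contains $\C(z)$, and let $V(h)\subseteq \mathcal{M}$ denote the $\C(z)$-vector space spanned by $\{h^{(n)}:n\ge 0\}$. The first step is to prove the equivalence: \emph{$h$ is holonomic if and only if $\dim_{\C(z)} V(h)<\infty$}. One direction is immediate, since a holonomic equation $P_k h^{(k)}+\cdots+P_0 h=0$ with $P_k\ne 0$ lets one solve for $h^{(k)}$ as a $\C(z)$-combination of $h,\dots,h^{(k-1)}$, and differentiating this relation shows inductively that every $h^{(n)}$ lies in $\mathrm{span}_{\C(z)}\{h,\dots,h^{(k-1)}\}$. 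Conversely, if $V(h)$ is finite-dimensional, then choosing the least $k$ for which $h^{(k)}$ lies in $\mathrm{span}_{\C(z)}\{h,\dots,h^{(k-1)}\}$ produces, after clearing denominators, a holonomic differential equation with nonvanishing leading coefficient.

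With this equivalence in hand, parts (i) and (ii) are short. For (i), if $f$ is algebraic over $\C(z)$ with minimal polynomial $Q(z,w)$, implicit differentiation gives $f'=-Q_z(z,f)/Q_w(z,f)\in\C(z)(f)$, and iterating shows every $f^{(n)}\in\C(z)(f)$; since $\C(z)(f)$ is a finite extension of $\C(z)$, the space $V(f)$ is finite-dimensional. For (ii), write $d_1=\dim_{\C(z)} V(f)$ and $d_2=\dim_{\C(z)} V(g)$. Every derivative of $f+g$ lies in $V(f)+V(g)$, of dimension at most $d_1+d_2$, and by the Leibniz rule every derivative of $fg$ lies in the $\C(z)$-span of the products of a basis of $V(f)$ with a basis of $V(g)$, of dimension at most $d_1 d_2$; both spaces are therefore finite-dimensional.

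The substantive case is (iii), and it is where I expect the only real difficulty, because two separate finiteness conditions must be combined through the chain rule. The idea is to work not over $\C(z)$ but over the finite extension $K:=\C(z)(f)$, which by the reasoning in (i) is closed under $d/dz$, since $f'\in K$. Let $g,g',\dots,g^{(m-1)}$ span $V(g)$ over $\C(z)$, and let $W$ be the $K$-vector space spanned by the composites $g^{(j)}\circ f$ for $0\le j\le m-1$; here we use that $g$ is entire, so that each $g^{(j)}\circ f$ is a well-defined germ. The key claim is that $W$ is stable under differentiation: the derivative of a generator is $(g^{(j+1)}\circ f)\,f'$, and since $g^{(j+1)}$ is a $\C(z)$-combination of $g,\dots,g^{(m-1)}$, substituting $z\mapsto f$ expresses $g^{(j+1)}\circ f$ as a $K$-combination of the generators, while $f'\in K$; as the coefficient field $K$ is itself closed under $d/dz$, this shows $W$ is $d/dz$-stable. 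Since $g\circ f\in W$, induction gives $(g\circ f)^{(n)}\in W$ for all $n$, so $V(g\circ f)\subseteq W$. Finally $\dim_{\C(z)} W = \dim_K W\cdot[K:\C(z)]<\infty$, and the characterization of the first step yields that $g\circ f$ is holonomic.
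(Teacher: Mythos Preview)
The paper does not actually prove this lemma; it records it as a known fact with the reference ``See Theorem 7.2 in \cite{KauersPaule}.'' So there is no in-paper argument to compare against. Your self-contained proof is correct and is essentially the standard one that appears in such references: the criterion $\dim_{\C(z)} V(h)<\infty$ is precisely how these closure properties are normally established, and your treatment of (iii) via the intermediate differential field $K=\C(z)(f)$ (finite over $\C(z)$ and closed under $d/dz$) is the expected route.

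One small point worth making explicit in (iii): when you write ``substituting $z\mapsto f$'' into a rational function $r\in\C(z)$, you are using that $r\circ f$ is a well-defined germ in $\mathcal{M}$ lying in $K$. This requires $f$ to be non-constant, since otherwise $f$ could coincide with a pole of $r$. The constant case is of course trivial ($g\circ f$ is then a constant, hence holonomic), but it is cleaner to dispose of it at the outset. Relatedly, the identity $g^{(j+1)}=\sum r_i\,g^{(i)}$ that you substitute into is a priori an identity of germs at $0$; it extends to an identity of meromorphic functions on all of $\C$ because $g$ is entire and the $r_i$ are rational, which is what justifies evaluating it at $f(z)$ rather than at $z$. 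You are implicitly using the hypothesis that $g$ is entire here, and it would strengthen the write-up to say so.
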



\section{Pell equations} \label{SecPell}

As mentioned in the introduction, we will be interested in the holomorphic solutions of the Pell equation 
\begin{equation}\label{EqnPell}
X^2 - (z^2-1)Y^2=1.
\end{equation}

Let $w$ be the $2$-valued algebraic function defined by $w^2=z^2-1$. For $n\in \Z$ let us define $x_n,y_n\in \C[z]$ by the identity $x_n+wy_n = (z+w)^n$. With this notation, let us recall some preliminary results on the functional solutions of \eqref{EqnPell}.

\begin{lemma}[See Section 2 in \cite{Denef}] \label{Lemma1} Let $f,g\in \C[z]$. Then $X=f$, $Y=g$ is a solution of \eqref{EqnPell} if and only if there are $\epsilon\in\{-1,1\}$ and $n\in \Z$ with $(f,g) = (\epsilon x_n,  y_n)$.
Furthermore, $x_n,y_n\in \Z[z]$ and for all $n\in \Z$ we have $y_n(1)=n$.
\end{lemma}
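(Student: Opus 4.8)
The plan is to treat the functional Pell equation algebraically. Introduce the quadratic ring $R = \C[z][T]/(T^2 - (z^2-1))$, which is a domain because $z^2-1$ is squarefree and hence not a square in $\C[z]$; it is free of rank $2$ over $\C[z]$ with basis $\{1, w\}$, so every element is uniquely $X + wY$ with $X, Y \in \C[z]$. Equip $R$ with the conjugation $\sigma$ fixing $\C[z]$ and sending $w \mapsto -w$, and the multiplicative norm $N(u) = u\,\sigma(u)$. Then $N(X + wY) = X^2 - (z^2-1)Y^2$, so that $(f,g)$ solves \eqref{EqnPell} if and only if $N(f + wg) = 1$. Since $N$ is multiplicative with $N(1) = 1$, the set $U = \{u \in R : N(u) = 1\}$ is a group under multiplication, and $\omega := z + w$ lies in $U$ with $\omega^n = x_n + w y_n$ by definition; the task becomes the determination of $U$.

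The key step is to identify $R$ with a Laurent polynomial ring. Setting $t = \omega = z + w$, the relation $(z+w)(z-w) = z^2 - (z^2-1) = 1$ gives $t^{-1} = z - w \in R$ and hence $z = (t + t^{-1})/2$, $w = (t - t^{-1})/2$. Thus $R = \C[z,w] = \C[t, t^{-1}]$, and since $t$ is transcendental over $\C$ this is the Laurent polynomial ring; geometrically this is the parametrization of the conic $w^2 = z^2-1$ by $\G_m$. Under this identification $\sigma$ becomes $t \mapsto t^{-1}$. The units of $\C[t, t^{-1}]$ are exactly the monomials $c\,t^n$ with $c \in \C^\ast$ and $n \in \Z$, and $N(c\,t^n) = c\,t^n \cdot c\,t^{-n} = c^2$; therefore $N(c\,t^n) = 1$ forces $c = \pm 1$, giving $U = \{\pm \omega^n : n \in \Z\}$. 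To match the stated parametrization, note that $\sigma(\omega^n) = \omega^{-n}$ yields $x_{-n} = x_n$ and $y_{-n} = -y_n$; hence $\omega^n = (x_n, y_n)$ accounts for $\epsilon = 1$, while $-\omega^n = (-x_n, -y_n) = (-x_{-n}, y_{-n})$ accounts for $\epsilon = -1$, so that $U = \{(\epsilon x_n, y_n) : \epsilon \in \{-1,1\},\ n \in \Z\}$ exactly.

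It remains to verify the two arithmetic assertions. Expanding $\omega^{n+1} = (z+w)\omega^n$ gives the recursion $x_{n+1} = z x_n + (z^2-1) y_n$ and $y_{n+1} = x_n + z y_n$, with the analogous backward recursion from $\omega^{-1} = z - w$; starting from $x_0 = 1$, $y_0 = 0$, an immediate induction in both directions shows $x_n, y_n \in \Z[z]$ for all $n \in \Z$. For the values at $z = 1$, where $w$ vanishes, the same recursion gives $x_{n+1}(1) = x_n(1)$ and $y_{n+1}(1) = x_n(1) + y_n(1)$; since $x_0(1) = 1$ we get $x_n(1) = 1$, hence $y_{n+1}(1) = y_n(1) + 1$, and $y_n(1) = n$ follows by induction (equivalently, from the closed form $y_n = (t^n - t^{-n})/(t - t^{-1})$ evaluated at $t = 1$).

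The heart of the argument is the determination of $U$, i.e.\ the computation of the norm-one units, and I expect this to be the main obstacle. The $\G_m$-identification renders it transparent, but it rests on two points that must be checked with care: that $R$ is genuinely the Laurent polynomial ring (which uses that $z^2-1$ is not a square in $\C[z]$, so $t$ is transcendental and no relation is lost) and that every norm-one element is automatically of the form $X + wY$ with $X, Y \in \C[z]$ (the freeness of $R$ over $\C[z]$). An alternative, more elementary route avoids the identification and argues by descent on $\deg Y$: for a solution with $Y \ne 0$ one has $\deg X = \deg Y + 1$ with equal or opposite leading coefficients, and multiplying by $\omega^{-1}$ or $\omega$ accordingly lowers the degree; the subtle point there is the strict decrease, which is most transparent through the valuations at the two points at infinity of the conic, where multiplying by $\omega^{\mp 1}$ changes the order by exactly $\pm 1$, forcing termination at $Y = 0$ and hence $u = \pm 1 = \pm\omega^0$.
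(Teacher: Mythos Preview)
The paper does not supply its own proof of this lemma; it simply records the statement with a pointer to Denef's original article. Your write-up is therefore a genuine addition rather than a reproduction, and it is correct.

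Your main argument via the identification $R=\C[z,w]\cong\C[t,t^{-1}]$ with $t=z+w$ is clean and fully rigorous: the key observations that $N(u)=1$ forces $u\in R^\times$, that $R^\times=\{c\,t^n:c\in\C^\ast,\ n\in\Z\}$, and that $\sigma(t)=t^{-1}$ so $N(c\,t^n)=c^2$, together pin down $U=\{\pm\omega^n\}$ exactly. The bookkeeping translating $-\omega^n$ into $(\epsilon x_m,y_m)$ with $\epsilon=-1$, $m=-n$ via $x_{-n}=x_n$, $y_{-n}=-y_n$ is accurate, and the inductive verification of $x_n,y_n\in\Z[z]$ and $y_n(1)=n$ from the recursion $x_{n+1}=zx_n+(z^2-1)y_n$, $y_{n+1}=x_n+zy_n$ is straightforward and correct. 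The descent alternative you sketch at the end is essentially the classical route (and closer in spirit to how Denef argues), whereas your primary $\G_m$-parametrization is a more geometric packaging that makes the unit computation a one-liner; both reach the same conclusion, and your choice has the advantage of avoiding any degree-tracking case analysis.

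One very minor remark: the parenthetical ``evaluated at $t=1$'' for the closed form $y_n=(t^n-t^{-n})/(t-t^{-1})$ is a $0/0$ limit, so strictly speaking you are taking a limit (or using L'H\^opital); the recursion argument you give just before it already handles this cleanly, so you might simply drop the parenthetical.
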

\begin{lemma}[See Theorems 2.4 and 3.4 in \cite{CGPPV}]\label{Lemma2} Let $f,g\in \Hcal$. Then $X=f$, $Y=g$ is a solution of \eqref{EqnPell} if and only if there are $\epsilon\in\{-1,1\}$, $h\in \Hcal$ and $n\in \Z$ with $f+wg = \epsilon\cdot (z+w)^n\cdot \exp(wh)$. Furthermore, if these equivalent conditions hold and $f,g\in \Hcal_{\rm fin}$, then $h\in \C[z]$.
\end{lemma}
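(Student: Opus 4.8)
The plan is to prove the equivalence by uniformizing the conic $w^2=z^2-1$, and then to obtain the finite-order refinement by a growth estimate. First I would factor \eqref{EqnPell} as $(f+wg)(f-wg)=1$ and introduce the rational parameter $t$ with $z=\tfrac12(t+t^{-1})$ and $w=\tfrac12(t-t^{-1})$. This gives an isomorphism from $\C^*$ (coordinate $t$) onto the affine curve $\{w^2=z^2-1\}$, under which the deck transformation $w\mapsto -w$ becomes $t\mapsto t^{-1}$ and, crucially, $z+w=t$. Pulling back, $f$ and $g$ become holomorphic on $\C^*$, so $F(t):=f+wg$ is holomorphic on $\C^*$; the Pell relation reads $F(t)F(t^{-1})=f^2-(z^2-1)g^2=1$, and hence $F$ is nowhere vanishing.

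For the forward direction I would use that a nowhere-vanishing holomorphic function on $\C^*$ factors as $F(t)=t^n\exp(G(t))$, with $n\in\Z$ the winding number and $G$ holomorphic on $\C^*$. The relation $F(t)F(t^{-1})=1$ forces $G(t)+G(t^{-1})$ to be a constant $2b_0$ (an integer multiple of $2\pi i$), so $\epsilon:=\exp(b_0)$ satisfies $\epsilon^2=1$. Then $G-b_0$ is odd under $t\mapsto t^{-1}$ and therefore vanishes at the fixed points $t=\pm1$; since $w$ also vanishes simply there, the quotient $h:=(G-b_0)/w$ is holomorphic at $t=\pm1$, is invariant under $t\mapsto t^{-1}$, and hence descends to an entire function of $z$. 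Writing $t=z+w$ yields $F=\epsilon(z+w)^n\exp(wh)$, as desired. \textbf{Verifying that $h$ is genuinely entire} (single-valued and holomorphic across the branch points $z=\pm1$) is the first delicate point, handled precisely by the vanishing of the odd function $G-b_0$ at the ramification. The converse is immediate: if $f+wg=\epsilon(z+w)^n\exp(wh)$, then replacing $w$ by $-w$ gives $f-wg=\epsilon(z-w)^n\exp(-wh)$, and multiplying the two yields $(f+wg)(f-wg)=\epsilon^2(z^2-w^2)^n=1$, i.e.\ $f^2-(z^2-1)g^2=1$.

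Finally, for the finite-order statement I would assume $f,g\in\Hcal_{\rm fin}$ of order at most $A$ and track growth through the uniformization. Since $|z|\asymp\max(|t|,|t|^{-1})$ as $t\to0,\infty$, the bound $|f|,|g|\le\exp(O(|z|^A))$ together with $F=t^n\exp(G)$ gives $\mathrm{Re}\,G(t)=\log|F(t)|-n\log|t|\le C(|t|^A+|t|^{-A})$ on all of $\C^*$. Writing $G=\sum_k b_kt^k$, the averages $\tfrac1{2\pi}\int_0^{2\pi}\mathrm{Re}\,G(re^{i\theta})e^{-ik\theta}\,d\theta=\tfrac12(b_kr^k+\overline{b_{-k}}r^{-k})$ can be controlled by a Borel--Carath\'eodory argument: the $k=0$ average equals $\mathrm{Re}\,b_0$, which together with the one-sided upper bound yields an $L^1$ bound $\int_0^{2\pi}|\mathrm{Re}\,G(re^{i\theta})|\,d\theta=O(r^A+r^{-A})$. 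Evaluating the resulting coefficient estimate at radii $r$ and $r^{-1}$ and letting $r\to\infty$ kills $b_k$ and $b_{-k}$ for every $k>A$, so $G$ is a Laurent polynomial; since $t^k-t^{-k}=2wU_{k-1}(z)$ for the Chebyshev polynomials $U_{k-1}$, the descended function is $h=2\sum_{k\ge1}b_kU_{k-1}(z)\in\C[z]$.

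\textbf{The hard part will be this last step}: correctly converting the finite order of $f,g$ in the variable $z$ into the two-sided polynomial growth of $\mathrm{Re}\,G$ in the variable $t$, and then running the Laurent-coefficient bounds symmetrically at both $t\to0$ and $t\to\infty$. The uniformization and the odd/even bookkeeping of the first two paragraphs are essentially formal once the parametrization $z+w=t$ is in place.
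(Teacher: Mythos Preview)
The paper does not give its own proof of this lemma: it is stated with the attribution ``See Theorems 2.4 and 3.4 in \cite{CGPPV}'' and used as a black box. So there is no in-paper argument to compare against; any comparison would have to be with the cited source.

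That said, your proposal is a correct, self-contained proof. The Joukowski uniformization $z=\tfrac12(t+t^{-1})$, $w=\tfrac12(t-t^{-1})$ identifies the affine conic $w^2=z^2-1$ with $\C^*$ and turns $z+w$ into $t$; the factorization $F(t)F(t^{-1})=1$ then yields $F=t^n e^{G}$ with $G(t)+G(t^{-1})$ a constant in $2\pi i\Z$, and your odd/even bookkeeping together with the simple zeros of $w$ at $t=\pm1$ cleanly produces the entire $h$ (the removable-singularity argument for descent through the ramification is fine). The converse multiplication $(z+w)^n(z-w)^n=1$ is immediate. For the finite-order refinement, your growth transfer $|z|\le \tfrac12(|t|+|t|^{-1})$ gives $\mathrm{Re}\,G(t)\le C(|t|^A+|t|^{-A})$, and the Borel--Carath\'eodory/$L^1$ step correctly kills $b_k$ for $|k|>A$ by sending $r\to\infty$ and $r\to 0$ separately; the Chebyshev identity $t^k-t^{-k}=2w\,U_{k-1}(z)$ then shows $h\in\C[z]$. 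Two small points worth making explicit when you write it up: you may assume $A\ge 1$ so that the $n\log|t|$ term is absorbed, and in the $L^1$ bound you should record that the constant depends on $\mathrm{Re}\,b_0$, which is harmless. Overall the argument is sound; it supplies exactly what the paper imports from \cite{CGPPV}.
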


With this at hand, we can prove Theorem \ref{ThmPell}.

\begin{proof}[Proof of Theorem \ref{ThmPell}] Since $f,g\in  \Hcal_{\rm fin}$,  Lemma \ref{Lemma2} shows that  there are $\epsilon\in\{-1,1\}$, $h\in \C[z]$, and $n\in \Z$ with $f+wg = \epsilon\cdot (z+w)^n\cdot \exp(wh)$. Note that in a neighborhood of $z=0$, $w$ has two holomorphic branches $W$ and $-W$ determined by $W(0)=i$ and we obtain the following identities of holomorphic functions near $z=0$:
$$
f+Wg = \epsilon\cdot (z+W)^n\cdot \exp(Wh)\mbox{ and }f-Wg = \epsilon\cdot (z-W)^n\cdot \exp(-Wh).
$$

Noticing that $Wh$ is algebraic over $\C(z)$, Lemma \ref{Lemmah2} applied to the right hand side of these expressions gives that $A=f+Wg$ and $B=f-Wg$ are holonomic. Since $1/W$ is also algebraic and holomorphic near $z=0$, it is holonomic. Thus, $f=(A+B)/2$ and $g=(A-B)/(2W)$ are holonomic.
\end{proof}


\section{Definability of the integers} \label{SecDef}

\begin{proof}[Proof of Theorem \ref{ThmMainMain}] The argument is standard and we include it for the sake of completeness.

By Picard's theorem, the positive existential (p.e.) $\Lcal_z$-formula 
$$
c(x):\quad \exists y, y^2=x^5-1
$$
defines $\C$ in $\Acal$ because $y^2=x^5-1$ defines a curve of geometric genus $2$. By Theorem \ref{ThmPell}, the condition $\{f\in \Acal : f\mbox{ is holonomic}\}=\C[z]$, and Lemma \ref{Lemma1}, the p.e.\ $\Lcal_z$-formula
$$
p(y): \quad \exists x, x^2 - (z^2-1)y^2=1
$$
defines the set $\Pcal = \{y_n : n\in \Z\}\subseteq \Z[z]$. 

Note that since $\C[z]\subseteq \Acal$, for $\lambda\in \C$ and $y\in \C[z]$ the condition $y(1)=\lambda$ is equivalent to the  condition: there is $ f\in \Acal$ such that $(z-1)f = y-\lambda$.

Therefore, the p.e.\ $\Lcal_z$-formula
$$
\xi(t):\quad c(t)\wedge\exists y\exists f,  p(y)\wedge  (z-1)f=y-t.
$$
defines $\{y(1) : y\in \Pcal\}$ which is $\Z$ by Lemma \ref{Lemma1}. The undecidability result follows, since the p.e.\ theory of the ring $\Z$ is undecidable \cite{DPR, Matijasevic}.
\end{proof}

\begin{proof}[Proof of Theorem \ref{ThmMain}] By Theorem \ref{ThmMainMain} and Corollary \ref{Coro}.
\end{proof}


\section{Acknowledgments}

N. G.-F. was supported by ANID FONDECYT Regular grant 1211004 from Chile. H. P. was supported by ANID FONDECYT Regular grant 1230507 from Chile.

We thank Xavier Vidaux for several comments on a first version of this manuscript.

This research was heavily influenced by the ideas and work of Thanases Pheidas, who was one of the driving forces in the study of extensions of Hilbert's tenth problem, especially in the case of rings and fields of functions; see for instance \cite{CGPPV, LipPhe, PheI, PheII, PheInv,  PheVidHolo, GhentSurvey} among other works. We respectfully dedicate this article to his memory and legacy. He will be dearly missed.



\begin{thebibliography}{9}         

\bibitem{CGPPV}  D. Chompitaki, N. Garcia-Fritz, H. Pasten, T. Pheidas, X. Vidaux, \emph{The Diophantine problem for rings of exponential polynomials}. Ann. Sc. Norm. Super. Pisa Cl. Sci. (5) 23 (2022), no. 4, 1625--1636. 

\bibitem{DPR} M. Davis, H. Putnam, J. Robinson, \emph{The decision problem for exponential diophantine equations}. Ann. of Math. (2) 74 (1961), 425--436.

\bibitem{Denef} J. Denef, \emph{The diophantine problem for polynomial rings and fields of rational functions}, Trans. Amer. Math. Soc. {242} (1978), 391--399.

\bibitem{GarPas} N. Garcia-Fritz, H. Pasten, \emph{Uniform positive existential interpretation of the integers in rings of entire functions of positive characteristic}, J. Number Theory {156} (2015), 368--393.

\bibitem{KauersPaule} M. Kauers, P. Paule, \emph{The concrete tetrahedron. Symbolic sums, recurrence equations, generating functions, asymptotic estimates}. Texts and Monographs in Symbolic Computation. Springer Wien New York, Vienna, 2011.

\bibitem{LipPhe} L. Lipshitz, T. Pheidas, \emph{An analogue of Hilbert's Tenth Problem for p-adic entire functions}, J. Symbolic Logic {60-4} (1995), no. 4, 1301--1309.

\bibitem{Matijasevic}   Ju. V. Matijasevich, \emph{The Diophantineness of enumerable sets}, (Russian) Dokl. Akad. Nauk SSSR {191} (1970), 279--282. 

\bibitem{PheI} T. Pheidas \emph{An undecidability result for power series rings of positive characteristic}. Proc. Amer. Math. Soc. 99 (1987), no. 2, 364--366.

\bibitem{PheII} T. Pheidas, \emph{An undecidability result for power series rings of positive characteristic. II}. Proc. Amer. Math. Soc. 100 (1987), no. 3, 526--530.

\bibitem{PheInv} T. Pheidas, \emph{Hilbert's tenth problem for fields of rational functions over finite fields}. Invent. Math. 103 (1991), no. 1, 1--8. 

\bibitem{PheVidHolo} T. Pheidas, X. Vidaux, \emph{Hilbert's Tenth Problem for complex meromorphic functions in several variables}. Int. Math. Res. Not. IMRN 2022, no. 19, 15474--15504.

\bibitem{GhentSurvey} T. Pheidas, K. Zahidi, \emph{Undecidability of existential theories of rings and fields: A survey}, Contemp. Math. {270} (2000), 49--106.


\end{thebibliography}
\end{document}